\begin{document}
\author{Andrey Sarantsev}
\title{IID Time Series Testing}
\date{\today. \\ University of Nevada, Reno, Department of Mathematics and Statistics.\\ 
Email: \texttt{asarantsev@unr.edu}}

\begin{abstract}
Traditional white noise testing, for example the Ljung-Box test, studies only the autocorrelation function (ACF). Time series can be heteroscedastic and therefore not i.i.d. but still white noise (that is, with zero ACF). An example of heteroscedasticity is financial time series: times of high variance (financial crises) can alternate with times of low variance (calm times). Here, absolute values of time series terms are not white noise. We could test for white noise separately for original and absolute values, for example using Ljung-Box tests for both. In this article, we create an omnibus test which combines these two tests. Moreover, we create a general framework to create various i.i.d. tests. We apply tests to simulated data, both autoregressive linear and heteroscedastic. 
\end{abstract}

\maketitle

\thispagestyle{empty}

\theoremstyle{plain}
\newtheorem{thm}{Theorem}
\newtheorem{lemma}[thm]{Lemma}
\newtheorem{prop}[thm]{Proposition}
\newtheorem{cor}[thm]{Corollary}

\theoremstyle{definition}
\newtheorem{defn}{Definition}
\newtheorem{asmp}{Assumption}

\theoremstyle{remark}
\newtheorem{rmk}{Remark}
\newtheorem{exm}{Example}

\section{Introduction}

\subsection{Classic white noise testing} Take a time series $(X_n)_{n \in \mathbb Z}$ which is {\it weakly stationary:} its {\it autocorrlelation function} $\gamma(k) := \mathbb E[X_0X_k] = \mathbb E[X_tX_{t+k}]$ does not depend on $t \in \mathbb Z$. There exist a rich theory of these models with several classes of models: autoregressions, moving average models, their generalizations \textsc{ARIMA} (autoregressive integrated moving averages), stochastic volatility, generalized autoregressive conditional heteroscedastic \textsc{GARCH}, and others. We refer readers to classic textbooks: a comprehensive monograph \cite{TSTM} and its more applied version by the same authors, \cite{Book}; and a textbook \cite{Tsay} with emphasis on finance and connections to continuous-time models. See also exposition of time series in \cite[Part V]{Econometrics} with connection to linear regression models, economics, and finance. 

The simplest model is the so-called {\it white noise}: when 
\begin{equation}
\label{eq:white}
\gamma(k) = 0,\, k = 1, 2, \ldots
\end{equation}
A stronger condition on this time series is being independent identically distributed (i.i.d.) with finite second moment. Below in~\eqref{eq:SV} and~\eqref{eq:GARCH}, we provide some examples of white noise which are not i.i.d. sequences. However, if  these random variables are jointly Gaussian, then~\eqref{eq:white} implies independence. 

Given time series data $X_1, \ldots, X_N$, we can compute an empirical version $\hat{\gamma}(k)$ of the ACF: empirical correlation between $X_1, \ldots, X_{N-k}$ and $X_{k+1}, \ldots, X_N$. Under broad conditions, this is a consistent and asymptotically normal estimate for the true ACF, \cite[Theorem 7.2.2]{TSTM}.

White noise is the building block of other time series models, for example autoregression or moving average (where innovations must be white noise), or heteroscedastic models. In classic regression models, we assume that residuals form an i.i.d. Gaussian sequence. After fitting a linear regression, we need to test for {\it serial autocorrelation:} essentially,~\eqref{eq:white} for residuals. We refer the reader to detailed discussion in \cite[Chapter 7]{Econometrics}. 

Let us describe several testing methods for white noise. The most classic approach is to use empirical ACF. Earlier, we mentioned that empirical ACF is a consistent and asymptotically normal estimate of the true ACF. In particular, in case of the white noise, we have the following convergence in distribution, see \cite[Example 2.4.2]{Book}:
\begin{equation}
\label{eq:CLT-ACF}
\hat{\gamma}(k) \to 0,\quad N^{1/2}\hat{\gamma}(k) \to \mathcal N(0, 1),\quad N \to \infty,\quad k = 1, 2, \ldots
\end{equation}
One can test visually by plotting an empirical ACF, as seen in \textsc{Figure~\ref{fig:acf-bubble} (A)}. For fixed values of $k$, one can test the value of ACF at $k$ using asymptotic normality. Finally, there exist combined (portmanteau) test for empirical ACF values with lags $k = 1, \ldots, K$ for fixed $K$. The most classic example is the {\it Box-Pierce test}, see \cite{BoxPierce}: sum of squares of empirical ACF values. It follows from~\eqref{eq:CLT-ACF} that after normalization, this sum converges weakly to the $\chi^2_k$ distribution, as $N \to \infty$. This allows us to test the white noise hypothesis. A more precise modification, the {\it Ljung-Box test}, was developed in \cite{Ljung}, see also  \cite[Section 2.2]{Tsay}. Such autocorrelation tests are good in distinguishing white noise from linear autoregressive models, for example \textsc{AR}(1) or \textsc{MA}(1):
\begin{equation}
\label{eq:ARMA}
X_t - b = a(X_{t-1} - b) + Z_t,\quad X_t = Z_t + aZ_{t-1},
\end{equation}
where $Z_t$ are i.i.d. with mean zero, and $a \in (0, 1)$. Their ACF is different from zero, and this difference can be easily captured using these empirical ACF tests. This is discussed in detail in the textbooks \cite{Book, TSTM, Tsay}.

\subsection{Heteroscedasticity} We remind the readers again that white noise is a weaker condition than i.i.d. In particular, a time series can be white noise but not i.i.d. because it is heteroscedastic. The term {\it heteroscedasticity} means time-dependent variance of time series. For example, a {\it white noise in the strong sense:} a sequence of independent identically distributed random variables with zero mean and finite second moment, is {\it homoscedastic}. This concept of white noise in the strong sense is to be distinguished from the classic white noise, or white noise in the weak sense, described above. 

Homoscedasticity means that high absolute values of past terms do not influence current terms. This can fail even when the ACF is zero. Indeed, the classic Pearson correlation tests only for linear dependence. This leaves out quadratic or other forms of dependence, in particular, dependence upon absolute values. Financial series, in constrast, exhibit heteroscedastic behavior. Turbulent periods of crises with high volatility alternate with calm periods with low volatility. In terms of time series, this corresponds to high or low $|X_k|$ or, equivalently, $X_k^2$. One example of a white noise sequence which is not i.i.d. is the following {\it stochastic volatility} model:
\begin{equation}
\label{eq:SV}
X_t = V_tZ_t,\quad \ln V_t - v = a(\ln V_{t-1} - v) + W_t, 
\end{equation}
where $a \in (0, 1)$ and $Z_t,\, W_t$ are i.i.d. with mean zero. This sequence has zero autocorrelation function values. Thus it is white noise in the sense of But large $|X_t|$ implies likely large $V_t$, then likely large $V_{t+1}$, and likely large $|X_{t+1}|$. Thus the sequence $X_1, X_2, \ldots$ is not independent identically distributed. This model was introduced in \cite{Stein} (for continuous time); see also discussion in \cite[Section 3.12]{Tsay}. Another example is the \textsc{GARCH}(1, 1), introduced in \cite{GARCH}:
\begin{equation}
\label{eq:GARCH}
X_t = V_tZ_t,\quad V_t^2 = a + bX_{t-1}^2 + cV_{t-1}^2,
\end{equation}
where $a, b, c$ are positive constants, and $Z_1, Z_2, \ldots$ are i.i.d. with mean zero. Unlike~\eqref{eq:SV}, the model~\eqref{eq:GARCH} has only one innovation white noise sequence. Its volatility term does not form a separate time series; instead, it depends on previous values $X_{t-1}$ of the observed process $X$. The model~\eqref{eq:GARCH} is the classic benchmark for heteroscedastic financial time series. See \cite[Section 3.5]{Tsay} or \cite[Section 10.3.5]{Book}. 


\begin{figure}[t]
\subfloat[$\varepsilon(t)$]{\includegraphics[width = 8cm]{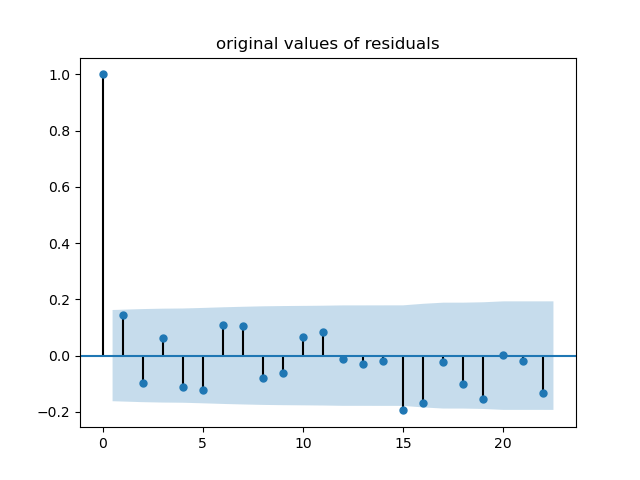}}
\subfloat[$|\varepsilon(t)|$]{\includegraphics[width = 8cm]{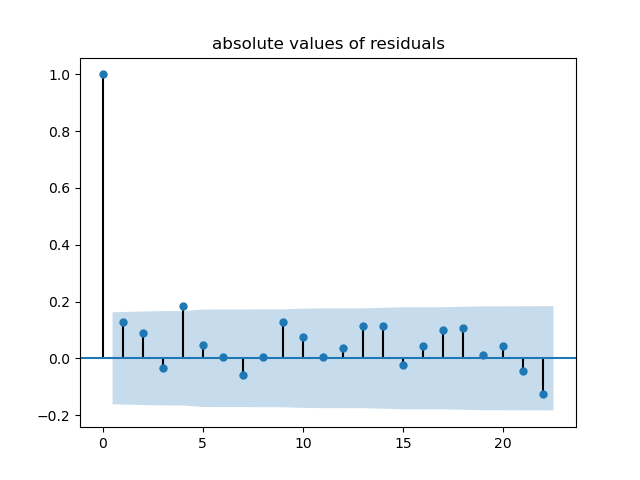}}
\caption{ACF for original and absolute values of residuals $\varepsilon(t)$ for a regression in the author's article \cite{IDY}. Is this enough to conclude that these values are i.i.d.?}
\label{fig:acf-bubble}
\end{figure}

\subsection{Main problem: i.i.d. testing} Testing whether a time series data is white noise, in the sense of zero ACF, as in~\eqref{eq:white}, is solved. A much harder problem is testing whether a time series data is a sample of independent identically distributed random variables. As we saw earlier, this property is stronger than white noise. 

Some motivation:  in~\eqref{eq:SV} and~\eqref{eq:GARCH} we do assume the innovations are i.i.d. and not just white noise. This is in contrast with linear models~\eqref{eq:ARMA}, which work well even under a weaker assumption of white noise. This is one motivation to test for i.i.d. In addition, as discussed above, regression residuals are assumed to be i.i.d. Gaussian. However, if we do not know a priori that their distribution is Gaussian, we need to test for i.i.d. not just white noise. As mentioned earlier, for Gaussian time series white noise implies independence. However, it is well-known that distributions in quantitative finance are not Gaussian: They have tails heavier than Gaussian. Thus it is essential to design testing for i.i.d. which does not depend on the assumption that the distribution is Gaussian. 

\subsection{Contributions of this article} A possible solution is a white noise test for the sequence of absolute values or squares. For example, we can plot the empirical ACF for $X^2$, or the empirical ACF for $|X|$. Together with an empirical ACF test for $X$, this will serve as a test for white noise in the strong sense. This is discussed, for example, with regard to GARCH models in \cite[Section 10.3.5]{Book}. We give another example: We analyzed financial data using linear regression in \cite[Section 3]{IDY}, and tested residuals $X$ for IID: We plot ACF for residuals and separately ACF for absolute values of these residuals; see \textsc{Figure~\ref{fig:acf-bubble} (A), (B)} for $X$ and $|X|$, respectively.

However, combining two tests in one test is not straightforward. We would like to avoid testing $X$ and $|X|$ separately. The current article is devoted to this question. 

Our goal is to make a test for i.i.d. similar to the white noise testing described above. Classic white noise tests distinguishes i.i.d. from~\eqref{eq:ARMA}, but not from~\eqref{eq:SV} or~\eqref{eq:GARCH}. The white noise test for $X_t^2$ distinguishes i.i.d. from ~\eqref{eq:SV} or~\eqref{eq:GARCH}, but it is not clear how well it distinguishes i.i.d. from~\eqref{eq:ARMA}. In this article, we create a test which separates i.i.d. from all these alternatives. 

We note that we do not know {\it a priori} the distribution of $X_i$. It can be Gaussian, which is the classic case (and often assumed in the literature). It can be another symmetric distribution, for example Laplace. Or it can be a general asymmetric distribution, such as skew-normal or asymmetric Laplace. 

We prove general theoretical results applied to any i.i.d. sequence. This gives us flexible framework for various statistical tests for i.i.d. We apply these tests to simulated four alternatives of i.i.d. series: two homoscedastic models, \textsc{AR}(1) and \textsc{MA}(1), as in~\eqref{eq:ARMA}, and two heteroscedastic models, stochastic volatility from~\eqref{eq:SV} and \textsc{GARCH}(1, 1) from~\eqref{eq:GARCH}. 

\subsection{Organization of this article} In Section 2, we state and prove our main theoretical results. In Section 3, we discuss applications of these results to construction of statistical testing. In Section 4, we apply these tests to the four alternatives mentioned above: autoregression and moving average, as in~\eqref{eq:ARMA}, stochastic volatility~\eqref{eq:SV}, and GARCH~\eqref{eq:GARCH}. We compare performance of our tests with classic Ljung-Box ACF tests for $X$ and $|X|$. The Appendix is devoted to technical definitions and lemmas. The Python code and resulting data for simulations are provided open-access in \texttt{GitHub} repository \texttt{asarantsev/IIDtest}.

\subsection{Notation} We denote by $\Rightarrow$ weak convergence. The Kronecker matrix product is denoted by $\otimes$, and the transpose of $A$ by $A'$. If random variables $X$ and $Y$ are equal in distribution, we write $X \stackrel{d}{=} Y$. The Euclidean norm of a vector $x$ is denoted by $\|x\|$. Finally, $I_k$ stands for the $k\times k$-identity matrix. 

\section{Main Theoretical Results} In this section, we state and prove our results for an i.i.d. sequence. These results will help us in \textsc{Section}~3 to construct a family of statistical tests for the following null hypothesis.

\bigskip

{\bf Null Hypothesis.} Random variables $X_1, X_2, \ldots$ are independent identically distributed.

\bigskip

Assume $X \stackrel{d}{=} X_i$. For a function $f : \mathbb R \to \mathbb R$, let
$$
m(f) := \mathbb E[f(X)],\quad Q(f) := \mathrm{Var}(f(X)),\quad s(f) := [Q(f)]^{1/2},\quad \mathcal K(f) := \mathbb E[f^4(X)].
$$
Take the first $N$ terms $f(X_1), \ldots, f(X_N)$. Compute empirical mean and standard deviation:
$$
\hat{m}(f) := \frac1N\sum\limits_{k=1}^Nf(X_k)
\quad \mbox{and} \quad 
\hat{s}^2(f) := \frac1N\sum\limits_{i=1}^N\left(f(X_i) - \hat{m}(f)\right)^2.
$$
Next, compute empirical autocovariance and autocorrelation for the lag $k$: 
$$
\hat{\gamma}_k(f) := \frac1{N-k}\sum\limits_{i=1}^{N-k}\left(f(X_i) - \hat{m}(f)\right)\left(f(X_{i+k}) - \hat{m}(f)\right)\quad \mbox{and}\quad \hat{\rho}_k(f) = \frac{\hat{\gamma}_k(f)}{\hat{s}^2(f)}.
$$
For two functions $f, g : \mathbb R \to \mathbb R$, define the covariance and correlation: 
\begin{equation}
\label{eq:cov-corr}
Q(f, g) := \mathrm{Cov}(f(X), g(X)),\quad C(f, g) := \frac{Q(f, g)}{s(f)s(g)}.
\end{equation}
If $\mathcal K(f) < \infty$ and $\mathcal K(g) < \infty$, then $Q(f, g) < \infty$. We can compute empirical cross-covariance and cross-correlation of $f(X_1), \ldots, f(X_N)$ and $g(X_1), \ldots, g(X_N)$ with time lag $k$:
\begin{equation}
\label{eq:corr}
\hat{\gamma}_k(f, g) := \frac1{N-k}\sum\limits_{i=1}^{N-k}(f(X_i) - \hat{m}(f))(g(X_{i+k}) - \hat{m}(g))\quad \mbox{and}\quad \hat{\rho}_k(f, g) := \frac{\hat{\gamma}_k(f, g)}{\hat{s}(f)\hat{s}(g)}.
\end{equation}
For the case $k = 0$ (ordinary covariance and correlation of $f(X)$ and $g(X)$, without any time lag), we skip the subscript $k = 0$ in~\eqref{eq:corr}. 

\begin{asmp}
\label{asmp:main}
Take functions $f_1, \ldots, f_m : \mathbb R \to \mathbb R$ which satisfy $\mathcal K(f_i) < \infty$. 
\end{asmp}

\begin{thm} Under the null hypothesis and Assumption~\ref{asmp:main}, the sequence of $K$ vectors in $\mathbb R^{m^2}$
\begin{equation}
\label{eq:cov-vector}
\hat{\mathbb{Q}}_k := \Bigl[\hat{\gamma}_k(f_i, f_j),\, i, j = 1, \ldots, m\Bigr],\quad k = 1, \ldots, K.
\end{equation}
of empirical auto- and cross-covariances from~\eqref{eq:corr} satisfies the Central Limit Theorem:
\begin{equation}
\label{eq:CLT-cov}
N^{1/2}\left[\hat{\mathbb{Q}}_1, \ldots, \hat{\mathbb{Q}}_K\right] \Rightarrow  (\Xi_1, \ldots, \Xi_K),\quad \Xi_k \sim N_{m^2}(\mathbf{0}, \mathbf{Q})\quad \mbox{i.i.d.}
\end{equation}
where the limiting covariance matrix $\mathbf{Q}$ is given by 
\begin{equation}
\label{eq:cov-matrix}
(\mathbf{Q})_{(ij), (i'j')} := Q(f_i, f_{i'})Q(f_j, f_{j'}).
\end{equation}
\label{thm:main-cov}
\end{thm}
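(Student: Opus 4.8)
The plan is to pass from the empirical-mean-centered statistics to their true-mean analogues, and then to establish a central limit theorem for a $K$-dependent stationary sequence via the Cram\'er--Wold device. First I would set $Y_i(t) := f_i(X_t) - m(f_i)$, so that $\mathbb E[Y_i(t)] = 0$, and introduce the oracle autocovariances $\tilde{\gamma}_k(f_i,f_j) := (N-k)^{-1}\sum_{t=1}^{N-k} Y_i(t)Y_j(t+k)$, in which the true means $m(f_i)$ replace the empirical means $\hat m(f_i)$. Expanding $f_i(X_t)-\hat m(f_i) = Y_i(t) - (\hat m(f_i)-m(f_i))$ and using $\hat m(f_i)-m(f_i) = O_p(N^{-1/2})$ shows that $\hat{\gamma}_k(f_i,f_j) - \tilde{\gamma}_k(f_i,f_j) = O_p(N^{-1})$, hence $N^{1/2}(\hat{\mathbb Q}_k - \tilde{\mathbb Q}_k) \to 0$ in probability; by Slutsky's theorem it then suffices to prove~\eqref{eq:CLT-cov} for the oracle version. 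The same remark lets me replace each normalization $(N-k)^{-1}$ by $N^{-1}$ and align the summation ranges at the cost of an $o_p(N^{-1/2})$ term.

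Next I would organize the products into a single array. For each $t$ set $W_t := \bigl(Y_i(t)Y_j(t+k)\bigr)$, indexed by $k=1,\ldots,K$ and $i,j=1,\ldots,m$. Since $W_t$ is a fixed function of $(X_t,\ldots,X_{t+K})$, the sequence $(W_t)$ is strictly stationary and $K$-dependent, and each component has mean zero because $Y_i(t)$ and $Y_j(t+k)$ are independent for $k\ge 1$. The assumption $\mathcal K(f_i)<\infty$ guarantees, via Cauchy--Schwarz, that the products $Y_i(t)Y_j(t+k)$ have finite second moment (being controlled by $\mathcal K(f_i)$ and $\mathcal K(f_j)$), so $W_t$ is square-integrable. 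By the Cram\'er--Wold device it is enough to prove asymptotic normality of $\lambda'N^{1/2}\mathrm{vec}(\tilde{\mathbb Q}_1,\ldots,\tilde{\mathbb Q}_K) = N^{-1/2}\sum_{t}\lambda'W_t + o_p(1)$ for every fixed $\lambda \in \mathbb R^{Km^2}$; the summands $\lambda'W_t$ form a scalar, strictly stationary, $K$-dependent, mean-zero sequence with finite variance, to which the classical central limit theorem for $m$-dependent sequences (Hoeffding--Robbins) applies and yields a Gaussian limit with variance $\lambda'\Sigma\lambda$, where $\Sigma = \sum_{h=-K}^{K}\mathrm{Cov}(W_0,W_h)$.

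It then remains to compute $\Sigma$ and exhibit the block structure of~\eqref{eq:cov-matrix}. The key computation is that every cross term $\mathbb E[Y_i(0)Y_j(k)\,Y_{i'}(h)Y_{j'}(h+k')]$ vanishes unless the four time indices $\{0,k,h,h+k'\}$ split into two coincident pairs, because an index carried by a single factor contributes a lone expectation $\mathbb E[Y_\cdot] = 0$. For positive lags this pairing is possible only when $h=0$ and $k'=k$, which eliminates all contributions with $h\neq 0$ — so distinct lags and distinct time shifts are exactly uncorrelated — and forces $\Sigma = \mathrm{Cov}(W_0)$ to be block diagonal across lags. Within the surviving block, independence of $X_0$ and $X_k$ factorizes $\mathbb E[Y_i(0)Y_{i'}(0)\,Y_j(k)Y_{j'}(k)] = Q(f_i,f_{i'})Q(f_j,f_{j'})$, which is precisely~\eqref{eq:cov-matrix}. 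Hence $\Sigma = \mathrm{diag}(\mathbf Q,\ldots,\mathbf Q)$ and the limiting vectors $\Xi_1,\ldots,\Xi_K$ are i.i.d.\ $N_{m^2}(\mathbf 0,\mathbf Q)$, as asserted.

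The main obstacle is the dependence across $t$: the product sequence is not i.i.d., so the ordinary multivariate CLT does not apply directly, and the whole argument hinges on recognizing the $K$-dependent structure and invoking the corresponding limit theorem. The remaining effort — the moment bookkeeping that collapses $\Sigma$ to a block-diagonal Kronecker-type matrix — is routine once the mean-zero factorization above is in place, but it is exactly where the specific form~\eqref{eq:cov-matrix} of the limiting covariance is produced.
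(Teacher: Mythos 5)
Your proof is correct, but it takes a genuinely different route from the paper. The paper does not prove the theorem from scratch: it invokes Chitturi's result on distributions of multivariate white noise autocorrelations \cite{ACF}, applying it to the vector $(f_1(X_t), \ldots, f_m(X_t))$, and its only real work is verifying Chitturi's hypotheses --- namely that the fourth-order cumulants of the variables $f_i(X_t), f_j(X_{t+k}), f_{i'}(X_{t+v}), f_{j'}(X_{t+k'+v})$ vanish (the Appendix Lemmas~\ref{lemma:cumulant} and~\ref{lemma:independent}) --- and then reading off the limiting covariance from Chitturi's formula via Kronecker deltas. You instead give a self-contained argument: reduce to oracle (true-mean) covariances by an $O_p(N^{-1})$ comparison plus Slutsky, recognize the product array $W_t$ as a strictly stationary, mean-zero, $K$-dependent sequence, apply the Cram\'er--Wold device together with the classical CLT for $m$-dependent sequences, and compute $\Sigma$ directly by the pairing argument. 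Your pairing computation (nonzero terms force $h=0$, $k'=k$, and the two time pairs factor independently) is in substance the same combinatorial fact as the paper's Lemma~\ref{lemma:independent}, but you use it to evaluate moments directly rather than to kill cumulants. What each approach buys: the paper's is short and delegates the hard probabilistic limit theorem to a reference, at the cost of depending on that reference's setting (multivariate weak white noise, where fourth moments are genuinely needed); yours is longer but elementary and transparent, makes the independence of the limits $\Xi_1, \ldots, \Xi_K$ across lags visible as block-diagonality of $\Sigma$, and even shows that under the i.i.d. null the fourth-moment condition in Assumption~\ref{asmp:main} is stronger than strictly necessary for this theorem, since independence across distinct times lets the product moments factor into second moments (the fourth moments do guarantee the third-moment condition of the Hoeffding--Robbins theorem, so your citation is safely applicable as stated). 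The small steps you wave at --- the $o_p(N^{-1/2})$ alignment of summation ranges and the square-integrability of $W_t$ --- all check out.
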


\begin{proof} We apply \cite[Theorem 1]{ACF}. Let us translate the notation: 
\begin{equation}
\label{eq:indices}
\ell := k',\, \alpha = i',\, \beta = j'.
\end{equation}
The multivariate white noise $\mathbf{X}_t := \begin{bmatrix} f_1(X_t) & \ldots & f_m(X_t) \end{bmatrix}$ (assuming without loss of generality that $m(f_i) = 0$ for all $i$, then $\mathbb E\mathbf{X}_t = \mathbf{0}$) has covariance matrix $V := (Q(f_i, f_j))_{i, j = 1, \ldots, m}$. Next, we compute limiting mean vector and covariance matrix. From \cite[(2.1)]{ACF}, asymptotic mean of $\hat{\gamma}_k(f_i, f_j)$ is zero, since $\delta_k = 0$ for $k = 1, \ldots, K$. To compute the covariance matrix, we need the concept of a {\it cumulant} of four random variables from Definition~\ref{defn:cum}. From Lemmas~\ref{lemma:cumulant} and~\ref{lemma:independent} in the Appendix, the cumulant of the following random variables is zero:
\begin{equation}
\label{eq:four}
f_i(X_t),\, f_j(X_{t+k}),\, f_{i'}(X_{t+v}),\, f_{j'}(X_{t+k'+v}).
\end{equation}
It is denoted in \cite[(2.2)]{ACF} by $K_{iji'j'}(0, k, v, k'+v)$, allowing for this different notation~\eqref{eq:indices}. Take $k \ne k'$. Then in \cite[(2.2)]{ACF}, we get $\delta_{k' - k} = \delta_{k' + k} = 0$. Thus the limiting covariance is $0$. For $k = k'$, the limiting covariance is nonzero, and we shall compute it: In \cite[(2.2)]{ACF} we get: $\delta_{k' - k} = 1$ but $\delta_{k' + k} = 0$. Thus the limiting covariance is $Q(f_i, f_{i'})Q(f_j, f_{j'})$. Since we are interested only in asymptotics, we can remove $N-k$ from the denominator in \cite[(2.2)]{ACF}, and put $N^{1/2}$ in the left-hand side. 
\end{proof}

\begin{thm} Under the null hypothesis and Assumption~\ref{asmp:main}, the sequence of $K$ vectors in $\mathbb R^{m^2}$
\begin{equation}
\label{eq:corr-vector}
\hat{\mathbb{C}}_k := \Bigl[\hat{\rho}_k(f_i, f_j),\, i, j = 1, \ldots, m\Bigr],\quad  k = 1, \ldots, K.
\end{equation}
of empirical auto- and cross-correlations from~\eqref{eq:corr} satisfies a similar Central Limit Theorem:
\begin{equation}
\label{eq:CLT-corr}
N^{1/2}\left[\hat{\mathbb{C}}_1, \ldots, \hat{\mathbb{C}}_K\right] \Rightarrow \mathcal (\tilde{\Xi}_1, \ldots, \tilde{\Xi}_K),\quad \tilde{\Xi}_k \sim N_{m^2}(\mathbf{0}, \mathbf{C})\quad \mbox{i.i.d.}
\end{equation}
with the limiting covariance matrix $\mathbf{C}$ given by 
\begin{equation}
\label{eq:corr-matrix}
(\mathbf{C})_{(ij), (i'j')} := C(f_i, f_{i'})C(f_j, f_{j'}).
\end{equation}
\label{thm:main-corr}
\end{thm}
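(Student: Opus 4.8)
The plan is to deduce this correlation CLT directly from the covariance CLT of Theorem~\ref{thm:main-cov} by means of Slutsky's theorem, exploiting the identity $\hat{\rho}_k(f_i,f_j) = \hat{\gamma}_k(f_i,f_j)/(\hat{s}(f_i)\hat{s}(f_j))$: each empirical correlation is the corresponding empirical covariance divided by the product of two empirical standard deviations, and crucially these denominators depend only on the pair $(i,j)$, not on the lag $k$.

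First I would establish consistency of the scaling factors. Under Assumption~\ref{asmp:main} each $f_i(X)$ has finite fourth, hence finite second, moment, so by the law of large numbers $\hat{m}(f_i) \to m(f_i)$ and $\frac1N\sum_{t=1}^N f_i^2(X_t) \to \mathbb{E}[f_i^2(X)]$ in probability; therefore $\hat{s}^2(f_i) \to Q(f_i) = s^2(f_i)$ and $\hat{s}(f_i) \to s(f_i)$ in probability. I assume each $s(f_i) > 0$, so that the correlations are well defined. Consequently the full vector whose entries are the products $\hat{s}(f_i)\hat{s}(f_j)$, ranging over all pairs $(i,j)$, converges in probability to the deterministic vector with positive entries $s(f_i)s(f_j)$.

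Next I would write the normalized correlation vector componentwise as $N^{1/2}\hat{\rho}_k(f_i,f_j) = N^{1/2}\hat{\gamma}_k(f_i,f_j)/(\hat{s}(f_i)\hat{s}(f_j))$. Theorem~\ref{thm:main-cov} supplies the joint weak convergence of the numerators $N^{1/2}\hat{\gamma}_k(f_i,f_j)$, over all $i,j$ and all lags $k = 1, \ldots, K$, to the Gaussian limit $(\Xi_1, \ldots, \Xi_K)$. Combined with the joint convergence in probability of the denominators to positive constants, the multivariate Slutsky theorem (componentwise division being continuous where the denominators are nonzero) yields
\[
N^{1/2}\left[\hat{\mathbb{C}}_1, \ldots, \hat{\mathbb{C}}_K\right] \Rightarrow \left(\tilde{\Xi}_1, \ldots, \tilde{\Xi}_K\right),\qquad \tilde{\Xi}_k^{(ij)} = \frac{\Xi_k^{(ij)}}{s(f_i)s(f_j)},
\]
writing $\Xi_k^{(ij)}$ for the $(i,j)$-entry of $\Xi_k$. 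Since the $\Xi_k$ are i.i.d. and the same deterministic factors rescale every lag identically, the $\tilde{\Xi}_k$ are again i.i.d. centered Gaussians.

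Finally I would identify the limiting covariance. Dividing each entry of $\Xi_k$ by its nonrandom factor rescales the covariance matrix entry by entry, so using the definition~\eqref{eq:cov-corr},
\[
\mathrm{Cov}\bigl(\tilde{\Xi}_k^{(ij)}, \tilde{\Xi}_k^{(i'j')}\bigr) = \frac{Q(f_i, f_{i'})Q(f_j, f_{j'})}{s(f_i)s(f_{i'})s(f_j)s(f_{j'})} = C(f_i, f_{i'})C(f_j, f_{j'}),
\]
which is precisely~\eqref{eq:corr-matrix}. I do not expect a serious obstacle, as the argument is a routine Slutsky reduction; the only point requiring care is the bookkeeping of the componentwise (rather than global) scaling in the multivariate Slutsky step, and the observation that the finite-fourth-moment hypothesis is exactly what simultaneously underwrites the numerator CLT through Theorem~\ref{thm:main-cov} and the consistency of the sample standard deviations appearing in the denominators.
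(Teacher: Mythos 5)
Your proposal is correct and follows essentially the same route as the paper: both deduce the correlation CLT from Theorem~\ref{thm:main-cov} by combining the law-of-large-numbers consistency $\hat{s}(f_i) \to s(f_i)$ with Slutsky's theorem, then identify the limiting covariance as $\mathbf{Q}_{(ij),(i'j')}/(s(f_i)s(f_j)s(f_{i'})s(f_{j'})) = C(f_i,f_{i'})C(f_j,f_{j'})$. If anything, your write-up is slightly more careful than the paper's, since you make explicit the lag-independence of the scaling factors and the implicit nondegeneracy requirement $s(f_i) > 0$.
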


\begin{proof} Let us show~\eqref{eq:CLT-corr}. By the Law of Large Numbers, $\hat{s}(f) \to s(f)$ as $N \to \infty$ in probability. Apply Slutsky's theorem: \cite[Section 5.5]{Textbook}. From~\eqref{eq:corr} and~\eqref{eq:CLT-cov}, we have convergence in law:
\begin{align}
\label{eq:Slutsky}
\begin{split}
&N^{1/2}\Bigl[\hat{\rho}_k(f_i, f_j),\, i, j = 1, \ldots, m,\, k = 1, \ldots, K\Bigr] \\  \Rightarrow \Bigl[\eta_{ijk} &:= \frac{\xi_{ijk}}{s(f_i)s(f_j)},\, i, j = 1, \ldots, m,\, k = 1, \ldots, K\Bigr],\\ [\xi_{ijk},\, & i, j = 1, \ldots, m] \sim \mathcal N_{m^2}(\mathbf{0}, \mathbf{Q}),\quad k = 1, \ldots, K.
\end{split}
\end{align}
From~\eqref{eq:Slutsky}, we get: $[\eta_{ijk}] \sim \mathcal N_{m^2}(\mathbf{0}, \mathbf{C})$ i.i.d. for $k = 1, \ldots, K$, with
\begin{align*}
(\mathbf{C})_{ij, i'j'} &:= \frac{\mathbf{Q}_{ij, i'j'}}{s(f_i)s(f_j)s(f_{i'})s(f_{j'})} = \frac{Q(f_i, f_{i'})Q(f_{j}, f_{j'})}{s(f_i)s(f_j)s(f_{i'})s(f_{j'})} \\ & = C(f_i, f_{i'})C(f_j, f_{j'}).
\end{align*}
In the last line, we used~\eqref{eq:cov-corr}. This completes the proof of~\eqref{eq:CLT-corr}. 
\end{proof}

\begin{rmk}
In terms of Kronecker matrix product, see \cite{Kronecker}, we can write the limiting covariance matrices in~\eqref{eq:cov-matrix} and~\eqref{eq:corr-matrix} as $\mathbf{Q} = \mathcal Q\otimes\mathcal Q$, where $\mathcal Q = (Q(f_i, f_j))_{i, j = 1, \ldots, m}$ is the covariance matrix of the random vector $(f_1(X), \ldots, f_m(X))$, and similarly $\mathbf{C} = \mathcal C\otimes\mathcal C$, where $\mathcal C = (C(f_i, f_j))$ is the correlation matrix of this random vector. 
\label{rmk:Kronecker} 
\end{rmk}

\begin{lemma} Under the null hypothesis and Assumption~\ref{asmp:main}, if correlations are zero:
\begin{equation}
\label{eq:zero-cov}
Q(f_i, f_j) = \mathrm{Cov}(f_i(X), f_j(X)) = 0\quad \mbox{for}\quad i, j = 1, \ldots, m,\quad i \ne j,
\end{equation}
then the empirical vectors from~\eqref{eq:corr} have the following weak limit:
\begin{equation}
\label{eq:corr-result}
N^{1/2}\left[\hat{\mathbb{C}}_1, \ldots, \hat{\mathbb{C}}_K\right] \Rightarrow N_{m^2K}(\mathbf{0}, I_{m^2K}).
\end{equation}
\label{lemma:iid}
\end{lemma}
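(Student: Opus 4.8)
The plan is to derive this lemma directly from Theorem~\ref{thm:main-corr} by showing that the zero-covariance hypothesis~\eqref{eq:zero-cov} collapses the limiting covariance matrix $\mathbf{C}$ to the identity. First I would invoke Theorem~\ref{thm:main-corr}, which already gives weak convergence of $N^{1/2}[\hat{\mathbb{C}}_1, \ldots, \hat{\mathbb{C}}_K]$ to a vector $(\tilde{\Xi}_1, \ldots, \tilde{\Xi}_K)$ whose blocks are i.i.d.\ $N_{m^2}(\mathbf{0}, \mathbf{C})$ across lags $k = 1, \ldots, K$, with $\mathbf{C}$ given by~\eqref{eq:corr-matrix}. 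So the only work left is to evaluate $\mathbf{C}$ under~\eqref{eq:zero-cov} and to assemble the $K$ independent blocks into a single $m^2K$-dimensional Gaussian.

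Next I would compute the correlation matrix $\mathcal{C} = (C(f_i, f_j))$ of the random vector $(f_1(X), \ldots, f_m(X))$. By definition~\eqref{eq:cov-corr}, $C(f_i, f_j) = Q(f_i, f_j)/(s(f_i)s(f_j))$; the diagonal entries are $C(f_i, f_i) = 1$, while the hypothesis~\eqref{eq:zero-cov} forces the off-diagonal entries $C(f_i, f_j) = 0$ for $i \ne j$. Hence $\mathcal{C} = I_m$. Here I am tacitly using $s(f_i) > 0$, i.e.\ that none of the $f_i(X)$ is degenerate, so that the correlations are well-defined; this nondegeneracy is implicit the moment we work with correlations at all.

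Then I would use the Kronecker representation from Remark~\ref{rmk:Kronecker}, namely $\mathbf{C} = \mathcal{C} \otimes \mathcal{C}$, to conclude
\begin{equation*}
\mathbf{C} = I_m \otimes I_m = I_{m^2}.
\end{equation*}
Thus each block $\tilde{\Xi}_k$ is standard Gaussian $N_{m^2}(\mathbf{0}, I_{m^2})$. Finally, since Theorem~\ref{thm:main-corr} asserts these blocks are mutually independent across $k$, their concatenation is a centered Gaussian on $\mathbb{R}^{m^2 K}$ whose covariance is block-diagonal with $K$ identical blocks $I_{m^2}$, that is, $I_{m^2 K}$, which is precisely~\eqref{eq:corr-result}.

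I do not expect any genuine obstacle: the statement is essentially a specialization of Theorem~\ref{thm:main-corr}. The only point requiring minor care is confirming that the cross-lag independence in~\eqref{eq:CLT-corr}, together with the within-lag covariance $I_{m^2}$, really does yield the full identity $I_{m^2 K}$ rather than some nontrivial cross-lag coupling; this follows immediately because independent standard-Gaussian blocks have vanishing cross-covariance.
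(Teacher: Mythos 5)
Your proposal is correct and follows essentially the same route as the paper: both specialize Theorem~\ref{thm:main-corr}, observing that~\eqref{eq:zero-cov} forces $C(f_i,f_j)=0$ for $i \ne j$ while $C(f_i,f_i)=1$, so that $\mathbf{C}$ reduces to $I_{m^2}$ and the independent blocks assemble into $I_{m^2K}$. Your version merely spells out details the paper leaves implicit (the Kronecker identity $I_m \otimes I_m = I_{m^2}$ and the nondegeneracy $s(f_i)>0$), which is fine but not a different argument.
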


\begin{proof} Immediately follows from Theorem~\ref{thm:main-corr}, since $C(f_i, f_j) = 0$ for $i \ne j$ by~\eqref{eq:zero-cov}, and $C(f_i, f_i) = 1$ (correlation of a random variable $f_i(X)$ with itself is one). 
\end{proof}

This convergence to a vector of i.i.d. standard Gaussians enables us to design statistical tests, see \textsc{Section}~3. However, if the correlations are nonzero, then we can still have convergence to this vector. Only we need to multiply the empirical correlation vector by a certain constant matrix. Take the set $\mathcal P(n)$ of positive definite symmetric $n\times n$-matrices. For every $A \in \mathcal P(n)$, we can find an $n\times n$-matrix $B$ (not necessarily symmetric or positive definite) such that $BB' = A^{-1}$. We can find a version of this matrix $B$ which continuously depends on $A$. In other words, take a continuous function $\Phi : \mathcal P(n) \to \mathbb R^{n\times n}$ such that $\Phi(A)\Phi'(A) = A^{-1}$. One example is the inverse of the matrix square root $\Phi(A) = (A^{1/2})^{-1} = (A^{-1})^{1/2}$.

\begin{lemma} Under the null hypothesis and Assumption~\ref{asmp:main}, the normalized vector of empirical auto- and cross-covariances satisfies, as $N \to \infty$,
\begin{equation}
\label{eq:CLT-cov-iid}
N^{1/2}\cdot\Phi(\mathcal Q)\otimes\Phi(\mathcal Q)\left[\hat{\mathbb Q}_1, \ldots, \hat{\mathbb Q}_K\right] \Rightarrow \mathcal N_{m^2K}(\mathbf{0}, I_{m^2K}).
\end{equation}
Similarly, the vector of empirical auto- and cross-correlations satisfies, as $N \to \infty$,
\begin{equation}
\label{eq:CLT-corr-iid}
N^{1/2}\cdot\Phi(\mathcal C)\otimes\Phi(\mathcal C)\left[\hat{\mathbb C}_1, \ldots ,\hat{\mathbb C}_K\right] \Rightarrow \mathcal N_{m^2K}(\mathbf{0}, I_{m^2K}).
\end{equation}
\label{lemma:CLT-iid}
\end{lemma}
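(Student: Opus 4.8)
The plan is to obtain both displays by applying a single fixed linear map to the covariance Central Limit Theorems already established, and then to read off the limiting covariance via the standard Gaussian transformation rule together with Kronecker algebra. I would first treat the covariance version~\eqref{eq:CLT-cov-iid}, and then transfer the argument verbatim to the correlation version~\eqref{eq:CLT-corr-iid} with $\mathcal C$ in place of $\mathcal Q$.

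First I would recall from Theorem~\ref{thm:main-cov} and Remark~\ref{rmk:Kronecker} that $N^{1/2}[\hat{\mathbb Q}_1,\ldots,\hat{\mathbb Q}_K] \Rightarrow (\Xi_1,\ldots,\Xi_K)$, where the $\Xi_k$ are i.i.d. $N_{m^2}(\mathbf 0, \mathcal Q\otimes\mathcal Q)$. Multiplication by the fixed matrix $M := \Phi(\mathcal Q)\otimes\Phi(\mathcal Q)$ (applied blockwise, i.e. via $I_K\otimes M$ on the stacked vector) is a continuous linear map, so weak convergence is preserved, and the image limit has i.i.d. blocks $M\Xi_k$. Since a linear image of a centered Gaussian is centered Gaussian with covariance $M(\mathcal Q\otimes\mathcal Q)M'$, it remains only to show that this matrix equals $I_{m^2}$; the i.i.d. structure across $k=1,\ldots,K$ then upgrades the per-block identity covariance to the full $I_{m^2K}$.

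The computation of $M(\mathcal Q\otimes\mathcal Q)M'$ is where the Kronecker identities $(A\otimes B)(C\otimes D) = (AC)\otimes(BD)$ and $(A\otimes B)' = A'\otimes B'$ do the work: they collapse the triple product into $[\Phi(\mathcal Q)\,\mathcal Q\,\Phi(\mathcal Q)']\otimes[\Phi(\mathcal Q)\,\mathcal Q\,\Phi(\mathcal Q)']$. Thus the entire statement reduces to the single $m\times m$ identity $\Phi(\mathcal Q)\,\mathcal Q\,\Phi(\mathcal Q)' = I_m$, after which $I_m\otimes I_m = I_{m^2}$ finishes the covariance case.

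Verifying that last identity is the step I expect to be the crux, and it is more delicate than the defining relation $\Phi(A)\Phi(A)' = A^{-1}$ alone. For the symmetric inverse square root $\Phi(A) = (A^{-1})^{1/2}$ singled out before the statement, $\Phi(\mathcal Q)$ is symmetric and commutes with the relevant powers of $\mathcal Q$, so $\Phi(\mathcal Q)\,\mathcal Q\,\Phi(\mathcal Q)' = (\mathcal Q^{-1})^{1/2}\,\mathcal Q\,(\mathcal Q^{-1})^{1/2} = I_m$ is immediate; this is the choice I would adopt, noting that $\mathcal Q$ is positive definite so $\Phi(\mathcal Q)$ is defined. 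One should be aware that for a general non-symmetric $B$ with $BB' = \mathcal Q^{-1}$ the identity $B\mathcal Q B' = I_m$ can fail---it forces $B$ to be normal---so the symmetric choice, or any normal whitening matrix, is what makes the argument go through. Finally, for~\eqref{eq:CLT-corr-iid} I would run exactly the same three steps starting from Theorem~\ref{thm:main-corr} and $\mathbf C = \mathcal C\otimes\mathcal C$, using that $\mathcal C$ is a positive definite correlation matrix so that $\Phi(\mathcal C)\,\mathcal C\,\Phi(\mathcal C)' = I_m$.
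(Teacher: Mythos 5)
Your proposal is correct and follows essentially the same route as the paper: both start from the CLT of Theorem~\ref{thm:main-cov} (resp.\ Theorem~\ref{thm:main-corr}), apply the fixed linear map $\Phi(\mathcal Q)\otimes\Phi(\mathcal Q)$ blockwise via the continuous mapping theorem, and collapse the limiting covariance using the Kronecker identities $(A\otimes B)(C\otimes D) = (AC)\otimes(BD)$ and $(A\otimes B)' = A'\otimes B'$. The difference worth noting is that you are more careful than the paper at the crux. The paper's proof only verifies $(\mathbf{E}\mathbf{E}')^{-1} = \mathcal Q\otimes\mathcal Q = \mathbf{Q}$ for $\mathbf{E} := \Phi(\mathcal Q)\otimes\Phi(\mathcal Q)$ and then appeals to ``properties of the multivariate normal distribution''; but, exactly as you observe, $\mathbf{E}\mathbf{E}' = \mathbf{Q}^{-1}$ alone yields $\mathrm{Cov}(\mathbf{E}\,\Xi_k) = \mathbf{E}\mathbf{Q}\mathbf{E}'$, which equals $I_{m^2}$ if and only if $\mathbf{E}'\mathbf{E} = \mathbf{Q}^{-1}$ as well, i.e.\ if and only if $\mathbf{E}$ is normal. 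Consequently the lemma, read with an arbitrary continuous $\Phi$ satisfying only $\Phi(A)\Phi(A)' = A^{-1}$ (for instance a Cholesky-type factor), is not literally correct; your restriction to the symmetric inverse square root $\Phi(A) = (A^{-1})^{1/2}$, or to any normal whitening matrix, is precisely what makes the argument valid, and it matches the example the paper singles out just before the statement. In short: same approach, but your proof closes a genuine gap that the paper's own proof leaves open.
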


\begin{proof}
The Kronecker matrix product commutes with ordinary matrix product, and therefore with matrix inversion; see the monograph \cite{Kronecker}. Thus the $m^2\times m^2$-matrix $\mathbf{E} := \Phi(\mathcal Q)\otimes\Phi(\mathcal Q)$ satisfies $(\mathbf{E}\mathbf{E}')^{-1} = \mathcal Q\otimes\mathcal Q = \mathbf{Q}$. From the properties of the multivariate normal distribution and~\eqref{eq:CLT-cov}, we conclude~\eqref{eq:CLT-cov-iid}. Similarly, we get~\eqref{eq:CLT-corr-iid}. 
\end{proof}

However, sometimes we do not know a priori the limiting covariance matrices $\mathbf{Q}$ or $\mathbf{C}$, since we do not know the covariance and correlation functions $Q(f_i, f_j)$ and $C(f_i, f_j)$. We must estimate all these from the data. Therefore, we need to state a version of Lemma~\ref{lemma:CLT-iid} with empirical correlations instead of theoretical ones. Define 
$$
\hat{\mathcal Q} := \left(\hat{\gamma}(f_i, f_j)\right)_{i, j = 1, \ldots, m}\quad \mbox{and}\quad \hat{\mathcal C} := \left(\hat{\rho}(f_i, f_j)\right)_{i, j = 1, \ldots, m}. 
$$

\begin{thm} Under the null hypothesis and Assumption~\ref{asmp:main}, replace in~\eqref{eq:CLT-cov-iid} the limiting covariance matrix $\mathcal Q$ with its estimate $\hat{\mathcal Q}$. Then, as $N \to \infty$,
\begin{equation}
\label{eq:estimated-cov}
N^{1/2}\Phi(\hat{\mathcal Q})\otimes\Phi(\hat{\mathcal Q})\left[\hat{\mathbb Q}_1, \ldots, \hat{\mathbb Q}_K\right] \Rightarrow \mathcal N_{m^2K}(\mathbf{0}, I_{m^2K}).
\end{equation}
Similarly, replace in~\eqref{eq:CLT-cov-iid} the limiting covariance matrix $\mathcal C$ with its estimate $\hat{\mathcal C}$. As $N \to \infty$,
\begin{equation}
\label{eq:estimated-corr}
N^{1/2}\Phi(\hat{\mathcal C})\otimes\Phi(\hat{\mathcal C})\left[\hat{\mathbb C}_1, \ldots, \hat{\mathbb C}_K\right] \Rightarrow \mathcal N_{m^2K}(\mathbf{0}, I_{m^2K}).
\end{equation}
\label{thm:empirical}
\end{thm}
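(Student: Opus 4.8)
The plan is to deduce both~\eqref{eq:estimated-cov} and~\eqref{eq:estimated-corr} from Lemma~\ref{lemma:CLT-iid} by a Slutsky-type argument, replacing the deterministic whitening matrix $\Phi(\mathcal Q)\otimes\Phi(\mathcal Q)$ by its plug-in counterpart $\Phi(\hat{\mathcal Q})\otimes\Phi(\hat{\mathcal Q})$. The first step is to establish consistency of the estimator $\hat{\mathcal Q}$. Each entry $\hat{\gamma}(f_i, f_j)$ is an empirical covariance of the i.i.d.\ sequence $(f_i(X_n), f_j(X_n))_n$; since Assumption~\ref{asmp:main} gives $\mathcal K(f_i) < \infty$, the Cauchy--Schwarz inequality yields $\mathbb E|f_i(X)f_j(X)| < \infty$, and the Law of Large Numbers produces entrywise convergence $\hat{\gamma}(f_i, f_j) \to Q(f_i, f_j)$ in probability. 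Hence $\hat{\mathcal Q} \to \mathcal Q$ in probability as $N \to \infty$.

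Next I would promote this to convergence of the whitening matrices. The map $A \mapsto \Phi(A)\otimes\Phi(A)$ is continuous on the open set $\mathcal P(m)$, since $\Phi$ is continuous there by construction and the Kronecker product is a continuous (bilinear) operation. Assuming $\mathcal Q \in \mathcal P(m)$ (the random variables $f_1(X), \ldots, f_m(X)$ are non-degenerate and not linearly dependent, so their covariance matrix is positive definite, as is already required for $\Phi(\mathcal Q)$ to be defined), the estimator $\hat{\mathcal Q}$ lies in $\mathcal P(m)$ with probability tending to one, and the continuous mapping theorem for convergence in probability gives
$$
\Phi(\hat{\mathcal Q})\otimes\Phi(\hat{\mathcal Q}) \to \Phi(\mathcal Q)\otimes\Phi(\mathcal Q)\quad \mbox{in probability.}
$$

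With these two ingredients the conclusion follows from Slutsky's theorem. By Theorem~\ref{thm:main-cov}, the vector $N^{1/2}[\hat{\mathbb Q}_1, \ldots, \hat{\mathbb Q}_K]$ converges in distribution, so it is $O_{\mathbb P}(1)$, while $\Phi(\hat{\mathcal Q})\otimes\Phi(\hat{\mathcal Q})$ converges in probability to the \emph{deterministic} matrix $\Phi(\mathcal Q)\otimes\Phi(\mathcal Q)$. Writing, blockwise in $k = 1, \ldots, K$,
$$
N^{1/2}\,\Phi(\hat{\mathcal Q})\otimes\Phi(\hat{\mathcal Q})\,\hat{\mathbb Q}_k = N^{1/2}\,\Phi(\mathcal Q)\otimes\Phi(\mathcal Q)\,\hat{\mathbb Q}_k + \Bigl(\Phi(\hat{\mathcal Q})\otimes\Phi(\hat{\mathcal Q}) - \Phi(\mathcal Q)\otimes\Phi(\mathcal Q)\Bigr)N^{1/2}\hat{\mathbb Q}_k,
$$
the first summand converges to $\mathcal N_{m^2K}(\mathbf 0, I_{m^2K})$ by Lemma~\ref{lemma:CLT-iid}, and the second is the product of an $o_{\mathbb P}(1)$ matrix and an $O_{\mathbb P}(1)$ vector, hence $o_{\mathbb P}(1)$. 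Slutsky's theorem then gives~\eqref{eq:estimated-cov}. The correlation statement~\eqref{eq:estimated-corr} is identical after replacing $\hat{\mathcal Q}, \mathcal Q, \hat{\mathbb Q}$ by $\hat{\mathcal C}, \mathcal C, \hat{\mathbb C}$ and invoking Theorem~\ref{thm:main-corr}; here consistency $\hat{\mathcal C} \to \mathcal C$ additionally uses $\hat{s}(f_i) \to s(f_i) > 0$ in probability, so the continuous mapping theorem applies to the correlation ratios.

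The main obstacle I expect is not the Slutsky mechanics, which are routine, but the careful handling of the domain of $\Phi$: since $\Phi$ is defined and continuous only on $\mathcal P(m)$, I must argue that $\hat{\mathcal Q}$ (respectively $\hat{\mathcal C}$) eventually lands in $\mathcal P(m)$ with probability tending to one, and that $\mathcal Q$ (respectively $\mathcal C$) is an interior point, so that the continuous mapping theorem is legitimately applicable at the limit. This is precisely where the non-degeneracy of $f_1(X), \ldots, f_m(X)$ is used, and it is the only point requiring more than a mechanical appeal to the preceding results.
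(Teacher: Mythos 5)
Your proof is correct and takes essentially the same route as the paper's: consistency of $\hat{\mathcal Q}$ (resp.\ $\hat{\mathcal C}$), continuity of $\Phi$ and of the Kronecker product, then Slutsky's theorem applied to the convergence in Theorem~\ref{thm:main-cov}; the paper simply writes the limit as $\Phi(\mathcal Q)\otimes\Phi(\mathcal Q)\left[Y_1, \ldots, Y_K\right]$ with $Y_k \sim \mathcal N_{m^2}(\mathbf{0}, \mathbf{Q})$ instead of using your explicit $o_{\mathbb P}(1)$-plus-main-term decomposition, which is the same argument in different packaging. Your added care about the domain of $\Phi$ --- that $\mathcal Q$ must be positive definite and that $\hat{\mathcal Q}$ lies in $\mathcal P(m)$ with probability tending to one --- addresses a point the paper's proof passes over silently, and is a genuine (if minor) improvement in rigor.
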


\begin{proof} By consistency of the covariance estimates, $\hat{\mathcal Q} \to \mathcal Q$ as $N \to \infty$ in probability (actually, even almost surely). Since the function $\Phi$ is continuous, we get: $\Phi(\hat{\mathcal Q} \to \Phi(\mathcal Q)$ and therefore $\Phi(\hat{\mathcal Q})\otimes\Phi(\hat{\mathcal Q}) \to \Phi(\mathcal Q)\otimes\Phi(\mathcal Q)$. Applying Slutsky's theorem \cite[Section 5.5]{Textbook} for random vectors to~\eqref{eq:CLT-cov}, we get:
\begin{align*}
N^{1/2}\Phi(\hat{\mathcal Q})\otimes\Phi(\hat{\mathcal Q})\left[\hat{\mathbb Q}_1, \ldots, \hat{\mathbb Q}_K\right] \Rightarrow  \Phi(\mathcal Q)\otimes\Phi(\mathcal Q)\left[Y_1, \ldots, Y_K\right] \stackrel{d}{=} \left[Z_1, \ldots, Z_K\right].
\end{align*}
for i.i.d. $Y_k \sim \mathcal N_{m^2}(\mathbf{0}, \mathbf{Q})$ and i.i.d. $Z_k \sim \mathcal N_{m^2}(\mathbf{0}, \mathbf{Q})$. This completes the proof of~\eqref{eq:estimated-cov}. The proof of~\eqref{eq:estimated-corr} is similar.  
\end{proof}

\section{Statistical Tests}

Theoretical results from the previous section allow us to design statistical tests. 

\subsection{The case of uncorrelated functions} Most obviously, Lemma~\ref{lemma:iid} has the following practical value. 

\begin{cor}
Under assumptions of Lemma~\ref{lemma:iid}, a function of auto- and cross-correlations:
\begin{equation}
\label{eq:statistic}
\mathbb{T}_N := N\sum\limits_{k=1}^K\|\hat{\mathbb C}_k\|^2 \equiv N\sum\limits_{i=1}^m\sum\limits_{j=1}^m\sum\limits_{k=1}^K\left[\hat{\rho}_k(f_i, f_j)\right]^2
\end{equation}
satisfies the following weak convergence result, as $N \to \infty$:
\begin{equation}
\label{eq:chi-squared}
\mathbb{T}_N \Rightarrow \chi^2_{m^2K}.
\end{equation}
\label{cor:iid}
\end{cor}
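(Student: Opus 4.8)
The plan is to recognize $\mathbb{T}_N$ as a continuous image of the very vector whose weak limit is already supplied by Lemma~\ref{lemma:iid}, and then invoke the continuous mapping theorem. First I would set
$$
V_N := N^{1/2}\left[\hat{\mathbb{C}}_1, \ldots, \hat{\mathbb{C}}_K\right] \in \mathbb{R}^{m^2K},
$$
stacking the $K$ blocks $\hat{\mathbb{C}}_k$ into one long vector, and observe directly from the definition~\eqref{eq:statistic} that $\mathbb{T}_N = \|V_N\|^2$. The map $\varphi : \mathbb{R}^{m^2K} \to \mathbb{R}$ given by $\varphi(x) = \|x\|^2$ is continuous, so this reframes the corollary entirely in terms of $V_N$.

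Next I would invoke the weak convergence from Lemma~\ref{lemma:iid}, namely $V_N \Rightarrow Z$ with $Z \sim N_{m^2K}(\mathbf{0}, I_{m^2K})$. Applying the continuous mapping theorem to $\varphi$ yields $\mathbb{T}_N = \varphi(V_N) \Rightarrow \varphi(Z) = \|Z\|^2$.

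The final step is to identify this limit law. Because the limiting covariance is the \emph{identity} matrix $I_{m^2K}$, the coordinates $Z_1, \ldots, Z_{m^2K}$ of $Z$ are independent standard normals, so $\|Z\|^2 = \sum_{\ell=1}^{m^2K} Z_\ell^2$ is by definition a chi-squared random variable with $m^2K$ degrees of freedom. This establishes~\eqref{eq:chi-squared}.

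There is essentially no obstacle here; the corollary is a direct consequence of Lemma~\ref{lemma:iid} via the continuous mapping theorem. The only point requiring any attention is that the chi-squared (rather than a weighted sum of squares) arises precisely because the limit has identity covariance, which in turn is the content of the zero-correlation hypothesis~\eqref{eq:zero-cov} forcing $C(f_i, f_j) = 0$ for $i \ne j$. Were the off-diagonal correlations nonzero, one would instead land in the setting of Lemma~\ref{lemma:CLT-iid} and need the normalizing matrix $\Phi(\mathcal C)\otimes\Phi(\mathcal C)$ to recover a chi-squared statistic.
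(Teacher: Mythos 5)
Your proposal is correct and follows essentially the same route as the paper's proof: stack the $K$ blocks into one vector in $\mathbb{R}^{m^2K}$, apply Lemma~\ref{lemma:iid} for the weak limit $N_{m^2K}(\mathbf{0}, I_{m^2K})$, pass through the continuous map $x \mapsto \|x\|^2$, and identify the limit as $\chi^2_{m^2K}$ because the identity covariance makes the coordinates i.i.d. standard normals. The paper phrases the last step as ``continuity of the square of the Euclidean norm'' rather than naming the continuous mapping theorem, but the argument is identical.
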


\begin{proof}
This follows immediately from Lemma~\ref{lemma:iid}: The sequence of $K$ vectors $\hat{\mathbb C}_k \in \mathbb R^{m^2}$ for $k = 1, \ldots, K$ forms a vector $\hat{\mathbb C}$ from $\mathbb R^{m^2K}$ with Euclidean norm 
$$
\|\hat{\mathbb C}\|^2 = \sum\limits_{k=1}^K\|\hat{\mathbb C}_k\|^2 = \sum\limits_{i=1}^m\sum\limits_{j=1}^m\sum\limits_{k=1}^K\left[\hat{\rho}_k(f_i, f_j)\right]^2.
$$
From~\eqref{eq:corr-result}, we get: $N^{1/2}\hat{\mathbb C} \Rightarrow \mathcal N_{m^2K}(\mathbf{0}, I_{m^2K})$. The  sum of squares of $n$ i.i.d. random variables is distributed as the $\chi^2$ random variable with $n$ degrees of freedom. Apply this to $n = m^2K$.  Using continuity of the square of the Euclidean norm as a function of its vector, we conclude~\eqref{eq:chi-squared}.
\end{proof}

Corollary~\ref{cor:iid} allows us to devise the following test: Fix significance level $p$ (for example, 5\%), and take $u$ such that \begin{equation}
\label{eq:chi2}
W \sim \chi^2_{m^2K},\quad \mbox{has}\quad \mathbb P(W > u) = p.
\end{equation}
We reject the null hypothesis with significance level $p$ if $\mathbb{T}_N > u$. This would be an analogue of the Box-Pierce test from~\cite{BoxPierce}. In fact, we get the Box-Pierce test if we take $m = 1$ and $f_1(x) = x$ in Assumption~\ref{asmp:main}. In the \textsc{Introduction,} we mentioned that there exist a better (more precise) version of this test, which is called Ljung-Box test, \cite{Ljung}. 

\begin{cor}
Under assumptions of Lemma~\ref{lemma:iid}, fix a constant $c > 0$ and take the function
\begin{equation}
\label{eq:statistics-Ljung}
\mathbb{L}_N := N(N+c)\sum\limits_{k=1}^K\frac{\|\hat{\mathbb C}_k\|^2}{N-k}.
\end{equation}
Then we have the following weak convergence result, as $N \to \infty$:
\begin{equation}
\label{eq:Ljung-conv}
\mathbb{L}_N \Rightarrow \chi^2_{m^2K}. 
\end{equation}
\label{cor:iid-Ljung}
\end{cor}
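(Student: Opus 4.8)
The plan is to reduce Corollary~\ref{cor:iid-Ljung} to the already-established Corollary~\ref{cor:iid} by showing that the Ljung-Box statistic $\mathbb{L}_N$ and the Box-Pierce statistic $\mathbb{T}_N$ differ by a factor that converges to $1$ in probability, so that Slutsky's theorem transfers the weak limit. First I would rewrite $\mathbb{L}_N$ term by term against $\mathbb{T}_N = N\sum_{k=1}^K\|\hat{\mathbb C}_k\|^2$, isolating the lag-dependent coefficient
\begin{equation}
\label{eq:Ljung-coeff}
a_{N,k} := \frac{N+c}{N-k},\quad k = 1, \ldots, K,
\end{equation}
so that $\mathbb{L}_N = \sum_{k=1}^K a_{N,k}\cdot N\|\hat{\mathbb C}_k\|^2$.

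Next I would observe that, since $K$ and $c$ are fixed while $N \to \infty$, each coefficient satisfies $a_{N,k} \to 1$ deterministically as $N \to \infty$, uniformly over the finitely many lags $k = 1, \ldots, K$. Meanwhile, Corollary~\ref{cor:iid} (equivalently Lemma~\ref{lemma:iid}) gives the joint weak convergence $N^{1/2}[\hat{\mathbb C}_1, \ldots, \hat{\mathbb C}_K] \Rightarrow \mathcal N_{m^2K}(\mathbf 0, I_{m^2K})$, so by the continuous mapping theorem the vector $(N\|\hat{\mathbb C}_1\|^2, \ldots, N\|\hat{\mathbb C}_K\|^2)$ converges weakly to a vector of $K$ independent $\chi^2_{m^2}$ random variables whose sum is $\chi^2_{m^2K}$.

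Then I would combine these two facts: the random vector $(N\|\hat{\mathbb C}_k\|^2)_{k=1}^K$ converges in distribution and the deterministic coefficient vector $(a_{N,k})_{k=1}^K$ converges to the constant vector $(1, \ldots, 1)$, hence by Slutsky's theorem \cite[Section 5.5]{Textbook} the linear combination $\mathbb{L}_N = \sum_{k=1}^K a_{N,k}\cdot N\|\hat{\mathbb C}_k\|^2$ has the same weak limit as $\sum_{k=1}^K N\|\hat{\mathbb C}_k\|^2 = \mathbb{T}_N$, namely $\chi^2_{m^2K}$. This yields~\eqref{eq:Ljung-conv}.

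I do not expect a serious obstacle here, since the difference between $\mathbb{L}_N$ and $\mathbb{T}_N$ vanishes asymptotically and the correction factor is the classical Ljung-Box adjustment, which matters only in finite samples; the only point requiring minor care is to invoke Slutsky in its vector form so that the componentwise convergence $a_{N,k}\to 1$ may be paired with the joint (rather than merely marginal) weak convergence of the $N\|\hat{\mathbb C}_k\|^2$, thereby preserving the dependence structure that makes the limiting sum exactly $\chi^2_{m^2K}$ rather than a sum of possibly dependent chi-squared pieces.
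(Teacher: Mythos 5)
Your proof is correct and takes essentially the same route as the paper's: both rewrite $\mathbb{L}_N$ as $\mathbb{T}_N$ perturbed by deterministic lag-dependent factors tending to $1$ and combine Lemma~\ref{lemma:iid} with Slutsky's theorem, the only cosmetic difference being that the paper rescales the Gaussian blocks $N^{1/2}\hat{\mathbb C}_k$ before taking squared norms while you take norms first and rescale the resulting scalars. Incidentally, your coefficient $a_{N,k} = (N+c)/(N-k)$ genuinely converges to $1$, which quietly repairs a slip in the paper's write-up, where $a(N,k) := \left[N(N+c)/(N-k)\right]^{1/2}$ is asserted to tend to $1$ although as written it grows like $N^{1/2}$ (the intended claim being $a(N,k)/N^{1/2} \to 1$).
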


\begin{proof}
Obviously, as $N \to \infty$, for each fixed $k$, we have: 
\begin{equation}
\label{eq:conv}
a(N, k) := \left[\frac{N(N+c)}{N-k}\right]^{1/2} \to 1,\quad N \to \infty.
\end{equation}
Multiply the $k$th component $\hat{\mathbb C}_k$ of the vector $\hat{\mathbb C}$ by $a(N, k)$. Apply Slutsky's theorem, \cite[Section 5.5]{Textbook}, and use~\eqref{eq:conv}. Then we get:
\begin{equation}
\label{eq:new-conv}
\left[a(N, 1)\hat{\mathbb C}_1, \ldots, a(N, k)\hat{\mathbb C}_K\right] \Rightarrow \mathcal N_{m^2K}(0, I_{m^2K}).
\end{equation}
Taking the square of the Euclidean norm on both sides of~\eqref{eq:new-conv}, similarly to the proof of Corollary~\ref{cor:iid}, we get~\eqref{eq:Ljung-conv}. 
\end{proof}

Again, similarly to Corollary~\ref{cor:iid}, the result of Corollary~\ref{cor:iid-Ljung} allows us to create a statistical test: Reject the null hypothesis with significance level $p$, if $\mathbb{L}_N > u$, where $u$ is taken from~\eqref{eq:chi2}. The Ljung-Box test from \cite{Ljung} is a particular case of this test, if we take $m = 1$, $f_1(x) = x$, and $c = 2$. We impose the following reasonable assumptions on the distribution of $X$.

\begin{asmp} The distribution of $X$ is symmetric: $X \stackrel{d}{=} -X$.
\label{asmp:symmetry}
\end{asmp}

Next, we assume that the tails of each $X_i$ are not very heavy. 

\begin{asmp} The distribution of $X$ has finite fourth moment: $\mathbb E[X^4] = 0$. 
\label{asmp:moment-4}
\end{asmp}

\begin{rmk}
\label{rmk:simple-case}
Under Assumptions~\ref{asmp:symmetry} and~\ref{asmp:moment-4}, we can take $f_1(x) = x$ and $f_2(x) = |x|$. Indeed, then $Q(f_1, f_2) = \mathbb E[X|X|] = 0$. This family of two functions satisfies Assumption~\ref{asmp:main} and it has zero correlation.
\end{rmk}

However, if only Assumption~\ref{asmp:symmetry} holds, but Assumption~\ref{asmp:moment-4} does not hold, then we can easily modify these functions and make them bounded: $f_1(x) = \sin (ax)$ and $f_2(x) = \cos (ax)$, or any other two bounded functions, one odd and the other even. 

\subsection{The general case} Next, assume we found a sequence of functions satisfying Assumption~\ref{asmp:main}, but they do not have zero correlation. Then Lemma~\ref{lemma:iid} is not applicable. Instead, we apply Theorem~\ref{thm:empirical} and get the following results. 

\begin{cor} Under the null hypothesis and Assumption~\ref{asmp:main}, define the matrix 
$\hat{\mathbf C} := \hat{\mathcal C}\otimes\hat{\mathcal C}$. Next, define the following functions:
\begin{align}
\label{eq:functions-general}
\begin{split}
\tilde{\mathbb{T}}_N &:= N\cdot \sum\limits_{k=1}^K\hat{\mathbb C}'_k\hat{\mathbf C}^{-1}\hat{\mathbb C}_k,
\\ \tilde{\mathbb{L}}_N &:=  N(N+c)\cdot \sum\limits_{k=1}^K\frac{1}{N-k}\hat{\mathbb C}'_k\hat{\mathbf C}^{-1}\hat{\mathbb C}_k.
\end{split}
\end{align}
These statistics satisfy the following limit theorem:
$$
\tilde{\mathbb{T}}_N \Rightarrow \chi^2_{m^2K};\quad \tilde{\mathbb{L}}_N \Rightarrow \chi^2_{m^2K},\quad N \to \infty.
$$
The same results hold true if we replace in~\eqref{eq:functions-general} all $\hat{\mathbb C}_k$ by $\hat{\mathbb Q}_k$, and all $\hat{\mathcal C}$ by $\hat{\mathcal Q}$. 
\label{cor:test-general}
\end{cor}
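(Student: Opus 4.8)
The plan is to reduce the statement to a Gaussian quadratic form, exactly as Corollary~\ref{cor:iid} and Corollary~\ref{cor:iid-Ljung} reduced the uncorrelated case to a sum of squares, but now carrying the estimated weight matrix $\hat{\mathbf C}^{-1}$ through a Slutsky argument. The three ingredients are the joint central limit theorem of Theorem~\ref{thm:main-corr}, the consistency of $\hat{\mathbf C}$ as an estimate of $\mathbf C=\mathcal C\otimes\mathcal C$, and the elementary fact that a non-degenerate Gaussian vector, sandwiched by the inverse of its own covariance, produces a chi-squared variable. Note that the quadratic form $\hat{\mathbb C}_k'\hat{\mathbf C}^{-1}\hat{\mathbb C}_k$ does not involve the auxiliary matrix $\Phi$ at all, so the most natural route is via the main CLT rather than via Theorem~\ref{thm:empirical}.

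First I would record the limit supplied by Theorem~\ref{thm:main-corr}: the tuple $N^{1/2}(\hat{\mathbb C}_1,\ldots,\hat{\mathbb C}_K)$ converges weakly to $(\tilde\Xi_1,\ldots,\tilde\Xi_K)$ with $\tilde\Xi_k\sim\mathcal N_{m^2}(\mathbf 0,\mathbf C)$ i.i.d. Next I would establish that $\hat{\mathbf C}^{-1}\to\mathbf C^{-1}$ in probability: by consistency of the empirical correlations the matrix $\hat{\mathcal C}$ converges to $\mathcal C$, hence $\hat{\mathbf C}=\hat{\mathcal C}\otimes\hat{\mathcal C}\to\mathcal C\otimes\mathcal C=\mathbf C$ by continuity of the Kronecker product, and inversion is continuous on a neighbourhood of the (assumed) invertible limit $\mathbf C$. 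Since $\hat{\mathbf C}^{-1}$ converges to a constant while $N^{1/2}(\hat{\mathbb C}_k)_k$ converges in law, Slutsky's theorem \cite[Section 5.5]{Textbook} gives joint convergence of the pair, and the continuous map $\bigl((v_k)_k,M\bigr)\mapsto\sum_{k=1}^K v_k'Mv_k$ then yields
\[
\tilde{\mathbb T}_N=\sum_{k=1}^K\bigl(N^{1/2}\hat{\mathbb C}_k\bigr)'\hat{\mathbf C}^{-1}\bigl(N^{1/2}\hat{\mathbb C}_k\bigr)\Rightarrow\sum_{k=1}^K\tilde\Xi_k'\mathbf C^{-1}\tilde\Xi_k.
\]

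It then remains to identify the right-hand side. For each $k$, writing $\tilde\Xi_k=\mathbf C^{1/2}Z_k$ with $Z_k\sim\mathcal N_{m^2}(\mathbf 0,I_{m^2})$ shows $\tilde\Xi_k'\mathbf C^{-1}\tilde\Xi_k=\|Z_k\|^2\sim\chi^2_{m^2}$, and since the $\tilde\Xi_k$ are independent these $K$ summands are independent, so the sum is $\chi^2_{m^2K}$; this proves the claim for $\tilde{\mathbb T}_N$. For $\tilde{\mathbb L}_N$ I would write the summand coefficient as $N(N+c)/(N-k)=\frac{N+c}{N-k}\cdot N$, insert the scalar factors $(N+c)/(N-k)\to1$, and absorb them by Slutsky precisely as in the proof of Corollary~\ref{cor:iid-Ljung}, so the limit is unchanged. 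The covariance versions are obtained verbatim after replacing $\hat{\mathbb C}_k,\hat{\mathcal C},\mathbf C$ by $\hat{\mathbb Q}_k,\hat{\mathcal Q},\mathbf Q$ and invoking Theorem~\ref{thm:main-cov} in place of Theorem~\ref{thm:main-corr}.

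The step requiring the most care, and the main obstacle, is the non-degeneracy needed to make $\mathbf C^{-1}$ meaningful and the quadratic form exactly $\chi^2_{m^2}$ with full rank $m^2$. This forces the implicit hypothesis that $\mathcal C$, equivalently $\mathbf C=\mathcal C\otimes\mathcal C$, is positive definite, i.e.\ that no non-trivial linear combination of the centred variables $f_1(X),\ldots,f_m(X)$ vanishes almost surely; only then is $\hat{\mathbf C}$ invertible for all large $N$ with probability tending to one, and only then does inversion act continuously at $\mathbf C$. An equivalent route, avoiding the explicit quadratic form, is to observe that $\hat{\mathbb C}_k'\hat{\mathbf C}^{-1}\hat{\mathbb C}_k=\bigl\|\Phi(\hat{\mathcal C})\otimes\Phi(\hat{\mathcal C})\,\hat{\mathbb C}_k\bigr\|^2$ for the symmetric choice $\Phi(A)=(A^{-1})^{1/2}$ (using $\mathbf E\mathbf E'=\hat{\mathbf C}^{-1}$ as in Lemma~\ref{lemma:CLT-iid}), whereupon Theorem~\ref{thm:empirical} delivers the squared norms directly; both routes rest on the same positive-definiteness assumption.
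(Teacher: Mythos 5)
Your proposal is correct, but its primary route differs from the paper's. The paper proves this corollary by reusing Theorem~\ref{thm:empirical}: it sets $\hat{\mathbf{E}}_N := \Phi(\hat{\mathcal C})\otimes\Phi(\hat{\mathcal C})$, identifies each quadratic form $\hat{\mathbb C}_k'\hat{\mathbf C}^{-1}\hat{\mathbb C}_k$ with the squared norm $\bigl\|\hat{\mathbf{E}}_N\hat{\mathbb C}_k\bigr\|^2$, and then repeats the norm-continuity argument of Corollaries~\ref{cor:iid} and~\ref{cor:iid-Ljung}; this is precisely the ``alternative route'' you sketch in your closing paragraph. Your main argument instead bypasses $\Phi$ and Theorem~\ref{thm:empirical} entirely: you combine the CLT of Theorem~\ref{thm:main-corr} with consistency of $\hat{\mathbf C}^{-1}$ via Slutsky, and identify the limiting Gaussian quadratic form $\sum_{k}\tilde{\Xi}_k'\mathbf{C}^{-1}\tilde{\Xi}_k$ as $\chi^2_{m^2K}$ by the substitution $\tilde{\Xi}_k = \mathbf{C}^{1/2}Z_k$. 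Both are Slutsky-plus-continuous-mapping arguments, but yours buys two things. First, it makes explicit the non-degeneracy hypothesis --- positive definiteness of $\mathcal C$, hence invertibility of $\hat{\mathcal C}$ with probability tending to one --- which the paper uses silently (its $\Phi$ is only defined on $\mathcal P(n)$, and the corollary statement needs $\hat{\mathbf C}^{-1}$ to exist). Second, it sidesteps a transpose subtlety in the paper's algebra: the paper expands $\bigl\|\hat{\mathbf{E}}_N\hat{\mathbb C}_k\bigr\|^2$ as $\hat{\mathbb C}_k'\hat{\mathbf{E}}_N\hat{\mathbf{E}}_N'\hat{\mathbb C}_k$, whereas the correct expansion is $\hat{\mathbb C}_k'\hat{\mathbf{E}}_N'\hat{\mathbf{E}}_N\hat{\mathbb C}_k$; since the defining property of $\Phi$ is $\Phi(A)\Phi'(A) = A^{-1}$, the two agree only when $\hat{\mathbf{E}}_N$ is normal, e.g.\ for the symmetric choice $\Phi(A) = (A^{-1})^{1/2}$, which your alternative route correctly singles out as the choice that makes the identity valid. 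In exchange, the paper's route is shorter, since Theorem~\ref{thm:empirical} has already absorbed the Slutsky step. You also spell out the $\tilde{\mathbb{L}}_N$ scaling factors and the covariance version (via Theorem~\ref{thm:main-cov}), which the paper's proof leaves implicit.
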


\begin{proof}
We make changes in the proofs of Corollaries~\ref{cor:iid} and~\ref{cor:iid-Ljung}. Designate $\hat{\mathbf{E}}_N := \Phi(\hat{\mathcal C})\otimes\Phi(\hat{\mathcal C})$. Instead of the square norm of the $m^2$-dimensional vector $\hat{\mathbb C}_k$, we have the square norm of the $m^2$-dimensional vector $\hat{\mathbf{E}}_N\hat{\mathbb C}_k$. As discussed in the proof of Theorem~\ref{thm:empirical}, $\hat{\mathbf{E}}_N\hat{\mathbf{E}}_N' = \hat{\mathbb C}^{-1}$. Thus 
\begin{align*}
\left\|\hat{\mathbf{E}}_N\hat{\mathbb C}_k\right\|^2 = \left[\hat{\mathbf{E}}_N\hat{\mathbb C}_k\right]'\cdot\left[\hat{\mathbf{E}}_N\hat{\mathbb C}_k\right]  = \hat{\mathbb C}_k'\hat{\mathbf{E}}_N\hat{\mathbf{E}}_N'\hat{\mathbb C}_k = \hat{\mathbb C}_k' \hat{\mathbb C}^{-1}\hat{\mathbb C}_k.
\end{align*}
\end{proof}

This corollary allows us to create statistical tests, similarly to the case of uncorrelated functions $f_1, \ldots, f_m$ from the previous subsection. Take a significance level $p$, find the cutoff $u$ from~\eqref{eq:chi2}, and reject the mull hypothesis with significance level $p$ if $\tilde{\mathbb{T}}_N > u$ (analogue of the Box-Pierce test from \cite{BoxPierce}), or 
$\tilde{\mathbb{L}}_N > u$ (analogue of the Ljung-Box test from \cite{Ljung}). 

\section{Simulation} 

\subsection{Time series models} We applied this test to four models:

\begin{itemize}
\item \textsc{AR}(1) (autoregression of order $1$);

\item \textsc{MA}(1) (moving average of order $1$); 

\item \textsc{GARCH}(1, 1) (generalized conditional heteroscedastic autoregression of order $1$);

\item \textsc{SV}: a stochastic volatility model with log volatility modeled as \textsc{AR}(1), with innovations independent of the innovations for observed process. Unlike other three models, it has two independent series of innovations.
\end{itemize}

We compare our new test with Ljung-Box test (for both original and absolute values), for 5 lags. These models are not white noise. Thus our tests should reject the null white noise hypothesis. But existing Ljung-Box tests should reject it too. Which test is better? We can judge by the $p$-values. If one test has lower $p$-values that another test (when the null hypothesis is false), then the first test is better. We cannot expect our new test to improve upon the existing Ljung-Box tests, because our test is {\it portmanteau:} It combines white noise testing for original values and for absolute values.  However, we still compare the $p$-values to see which test performs better in which model. 

In each of four models, we consider both Gaussian and Laplace innovations: i.i.d. $(Z_t)$, with sample size $N = 100$. Each distribution has mean $\mathbb E[Z_t] = 0$ and variance $\mathrm{Var}(Z_t) = 1$, with densities $f(z) := (2\pi)^{-1/2}\exp(-z^2/2)$ and $f(z) = 0.5\exp(-|z|)$. Indeed, much of time series theory is (explicity or implicitly) based on the assumption that innovations is Gaussian. Thus we chose another symmetric distribution, with tails heavier than Gaussian. 

Note that these distributions for innovations are symmetric and have finite fourth moment. In other words, they satisfy Assumptions~\ref{asmp:symmetry} and~\ref{asmp:moment-4}. The same applies to the time series $(X_t)$. Therefore, we can use two test functions from Remark~\ref{rmk:simple-case}. 

In addition, a small $a$ in autoregression and moving average models means that this time series model is close to the white noise. A large $a$ (close to $1$) means that our time series model is very different from the white noise. The same is true for the stochastic volatility model (for $a$ from the autoregression for log volatility), and for \textsc{GARCH} (where the role of $a$ is played by the sum of parameters; see below). We take several values of parameters $a$: $0.1, 0.2, 0.3, 0.4, 0.5$.

\subsection{Results} We provide $p$-values in the four tables on the next page. For \textsc{AR} and \textsc{MA}, in most cases the Ljung-Box test for original values works better (that is, gives lower $p$-values) than the new test. But the Ljung-Box test for absolute values mostly fails to reject the null hypothesis; thus, it does not work as well as the new test. This seems to be a feature of linear time series models. 

Indeed, deviations from white noise in linear models are manifested via the autocorrelation function. For these models, white noise is equivalent to i.i.d. although of course for general time series models this is not the case. The classic Ljung-Box test deals directly with this function. Thus we do not need additional testing of autocorrelation functions for some related time series (such as absolute values) to reject the null hypothesis (i.i.d.)

For \textsc{GARCH} with Gaussian innovations, our results are inconclusive. For \textsc{GARCH} with Laplace innovations, almost in all cases the new test is better than both Ljung-Box tests. Finally, for \textsc{SV} with Gaussian innovations, in all cases the new test is better than both its competitors. And for \textsc{SV} with Laplace innovations, the new test is worse acrosss the board. Thus the results are inconclusive. We did not come up with a theoretical reason. 

\subsection{Further research} Further simulation testing is required to establish whether these results are statistical illusions. Testing for simulated \textsc{ARMA}$(p, q)$ models would be very valuable. Same is true for heteroscedastic models: We studied only one particular stochastic volatility model; there exist many versions. As for \textsc{GARCH}, we studied only \textsc{GARCH}$(p, q)$ with $p = q = 1$ all with parameters $a, b, c$ from~\eqref{eq:GARCH} equal to each other. 

Lemma~\ref{lemma:iid} and Theorem~\ref{thm:empirical} allow us to create other statistical tests for the null hypothesis. For example, we can take the maximum norm instead of the $L^2$-norm of the vector $\hat{\mathbb C}$ or other vectors. For the classic ACF, this was done in \cite{Max}. More generally, we can take the $L^p$-norm for $p \in [1, \infty]$, with $p = \infty$ corresponding to the maximum norm. 

\newpage

\begin{table}[t]
\begin{tabular}{||c||c|c|c||c|c|c||}
\hline
\hline
$a$ & $p_{O, G}$ & $p_{A, G}$ & $p_{N, G}$  & $p_{O, L}$ & $p_{A, L}$ & $p_{N, L}$\\
\hline
\hline
$0.1$ & $0.194$ & $	0.385$ & $0.453$ & $0.047$ & $0.057$ & $0.167$ \\
\hline
$0.2$ & $0.045$ & $	0.484$ & $0.071$ & $0.003$ & $0.052$ & $0.023$ \\
\hline
$0.3$ & $0.003$ & $	0.452$ & $0.129$ & $0.002$ & $0.005$ & $0.000$ \\
\hline
$0.4$ & $0.002$ & $0.413$ & $0.016$ & $0.000$ & $0.002$ & $0.000$ \\
\hline
$0.5$ & $0.000$ & $	0.246$ & $0.001$ & $0.000$ & $0.011$ & $0.000$ \\
\hline
\hline
\end{tabular}

\bigskip

\caption{Autoregression results: $p$-values. Fix an $a$ and simulate a sequence $(X_t)$ with $X_t = aX_{t-1} + Z_t$, $X_0 = 0$.}

\begin{tabular}{||c||c|c|c||c|c|c||}
\hline
\hline
$a$ & $p_{O, G}$ & $p_{A, G}$ & $p_{N, G}$  & $p_{O, L}$ & $p_{A, L}$ & $p_{N, L}$\\
\hline
\hline
$0.1$ & $0.095$ & $0.382$ & $0.417$ & $0.164$ & $0.292$ & $0.109$ \\
\hline
$0.2$ & $0.013$ & $	0.379$ & $0.330$ & $0.018$ & $0.382$ & $0.223$ \\
\hline
$0.3$ & $0.021$ & $	0.288$ & $0.017$ & $0.004$ & $0.496$ & $0.109$ \\
\hline
$0.4$ & $0.003$ & $	0.303$ & $0.005$ & $0.002$ & $0.067$ & $0.033$ \\
\hline
$0.5$ & $0.001$ & $	0.325$ & $0.005$ & $0.000$ & $0.240$ & $0.016$ \\
\hline
\hline
\end{tabular}

\bigskip

\caption{Moving average results: $p$-values. Fix an $a$  and simulate a sequence $(X_t)$ with $X_t = Z_t + aZ_{t-1}$.}

\begin{tabular}{||c||c|c|c||c|c|c||}
\hline
\hline
$a$ & $p_{O, G}$ & $p_{A, G}$ & $p_{N, G}$  & $p_{O, L}$ & $p_{A, L}$ & $p_{N, L}$\\
\hline
\hline
$0.1$ & 0.487 & 0.535 & 0.210 & 0.050 & 0.071	 & 0.036 \\
\hline
$0.2$ & 0.342 & 0.432 & 0.085 & 0.088 & 0.126	 & 0.094 \\
\hline
$0.3$ & 0.201 & 0.517 & 0.022 & 0.143 & 0.090	 & 0.235 \\
\hline
$0.4$ & 0.183 & 0.216 & 0.164 & 0.212 & 0.074	 & 0.514 \\
\hline
$0.5$ & 0.369 & 0.242 & 0.215 & 0.298 & 0.055 & 0.342 \\ 
\hline
\hline
\end{tabular}

\bigskip

\caption{Stochastic volatility results: $p$-values. We apply the tests to $X_t = e^{V_t}Z_t$, where $V_t = aV_{t-1} + W_t$, with independent $W_t \sim \mathcal N(0, 1)$.}

\begin{tabular}{||c||c|c|c||c|c|c||}
\hline
\hline
$a$ & $p_{O, G}$ & $p_{A, G}$ & $p_{N, G}$  & $p_{O, L}$ & $p_{A, L}$ & $p_{N, L}$\\
\hline
\hline
0.1	& 0.015 & 0.103 & 0.329 & 0.192 & 0.155 & 0.143\\
\hline
0.2	& 0.093 & 0.093 & 0.082 & 0.282 & 0.206 & 0.142\\
\hline
0.3 & 0.067 & 0.020 & 0.110 & 0.393 & 0.214 & 0.093\\ 
\hline
0.4	& 0.088	& 0.067	& 0.088 & 0.028 & 0.143 & 0.013\\
\hline
0.5	& 0.063 & 0.014 & 0.080 & 0.216 & 0.057 & 0.095\\
\hline
\hline
\end{tabular}
\bigskip
\caption{Stochastic volatility results: $p$-values. Here we apply the tests to $X_t = V_tZ_t$, where $V_t^2 = (1 + V_{t-1}^2 + X_{t-1}^2)\cdot (a/3)$. Thus we take all three parameters of GARCH to be equal to each other. Their sum is equal to the parameter $a$ used for other models.}
\end{table}

Indices O, A, N correspond to Ljung-Box for original values, Ljung-Box for absolute values, and our new test. Letters G and L correspond to Gaussian and Laplace innovations.

\newpage

\section*{Appendix}

\begin{defn} 
The {\it cumulant} of four random variables $X, Y, Z, W$ is defined as
$$
\mathbb E[X'Y'Z'W'] - \mathbb E[X'Y']\cdot \mathbb E[Z'W'] - \mathbb E[X'W']\cdot\mathbb E[Y'Z'] - \mathbb E[X'Z']\cdot\mathbb E[Y'W'],
$$
where $X', Y', Z', W'$ are centered versions of random variables $X, Y, Z, W$.
\label{defn:cum}
\end{defn}

\begin{lemma} 
Assume we can arrange four random variables $X, Y, Z, W$ into two independent bivariate random vectors. Then the cumulant of these four random variables is zero.
\label{lemma:cumulant}
\end{lemma}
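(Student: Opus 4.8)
The plan is to exploit the symmetry of the cumulant formula under permutations of its four arguments, reduce the three possible independence structures to a single canonical case, and then carry out the cancellation directly.

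First I would observe that the three bilinear product terms in Definition~\ref{defn:cum}, namely $\mathbb E[X'Y']\,\mathbb E[Z'W']$, $\mathbb E[X'W']\,\mathbb E[Y'Z']$, and $\mathbb E[X'Z']\,\mathbb E[Y'W']$, are precisely the three ways of partitioning $\{X, Y, Z, W\}$ into two unordered pairs, while the leading term $\mathbb E[X'Y'Z'W']$ is fully symmetric. Hence the entire expression is invariant under permutations of the four variables, and relabeling only permutes the three product terms among themselves. The hypothesis that the four variables split into two independent bivariate vectors singles out exactly one of these three pairings; after relabeling I may therefore assume without loss of generality that $(X, Y)$ is independent of $(Z, W)$. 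Since each centered variable is a deterministic function of the variable it centers, $(X', Y')$ is a function of $(X, Y)$ and $(Z', W')$ a function of $(Z, W)$, so $(X', Y')$ remains independent of $(Z', W')$.

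Next I would apply this independence twice. Because $(X', Y')$ and $(Z', W')$ are independent, the degree-four expectation factors as $\mathbb E[X'Y'Z'W'] = \mathbb E[X'Y']\,\mathbb E[Z'W']$, matching the first product term exactly. In each of the remaining two product terms, every factor pairs one variable from the first vector with one from the second; by independence and centering, $\mathbb E[X'W'] = \mathbb E[X']\,\mathbb E[W'] = 0$ and $\mathbb E[X'Z'] = \mathbb E[X']\,\mathbb E[Z'] = 0$, so both $\mathbb E[X'W']\,\mathbb E[Y'Z']$ and $\mathbb E[X'Z']\,\mathbb E[Y'W']$ vanish.

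Substituting into Definition~\ref{defn:cum} then yields $\mathbb E[X'Y']\,\mathbb E[Z'W'] - \mathbb E[X'Y']\,\mathbb E[Z'W'] - 0 - 0 = 0$, which is the claim. I do not anticipate a genuine obstacle in this argument; the only point that warrants care is the opening reduction, where one must verify that the cumulant is indeed permutation-symmetric, so that whichever of the three pairings carries the independence can always be routed to the leading product term and cancelled against the factored fourth moment.
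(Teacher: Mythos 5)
Your proof is correct and follows essentially the same route as the paper's: reduce without loss of generality to the case where $(X, Y)$ is independent of $(Z, W)$, factor $\mathbb E[X'Y'Z'W'] = \mathbb E[X'Y']\,\mathbb E[Z'W']$, and kill the two cross-pairing terms via centering. Your explicit verification that the cumulant is permutation-symmetric (so the relabeling is legitimate) is a point the paper leaves implicit in its ``without loss of generality,'' but it is the same argument.
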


\begin{proof} 
Assume, without loss of generality, that $(X, Y)$ is independent of $(Z, W)$. Then the corresponding centered vectors $(X', Y')$ and $(Z', W')$ are also independent. Thus 
\begin{align*}
\mathbb E[X'Y'Z'W'] &= \mathbb E[X'Y']\cdot \mathbb E[Z'W'],\\
\mathbb E[X'Z'] &= \mathbb E[X']\cdot \mathbb E[Z'] = 0 \cdot 0 = 0,\\
\mathbb E[X'W'] &= \mathbb E[X']\cdot\mathbb E[W'] = 0 \cdot 0 = 0.
\end{align*}
Combining these observations, we complete the proof. 
\end{proof}

\begin{lemma}
\label{lemma:independent}
Under conditions of Theorem~\ref{thm:main-cov}, for any $t, k, k', v \in \mathbb Z$ such that $t, k, k' \ge 1$, we can arrange random variables from~\eqref{eq:four} into two independent bivariate random vectors. 
\end{lemma}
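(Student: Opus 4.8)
The plan is to exploit that under the null hypothesis the underlying sequence $(X_n)$ is i.i.d., so that any two collections of the variables in~\eqref{eq:four} built from disjoint sets of time indices are automatically independent. Thus ``arranging the four variables into two independent bivariate vectors'' reduces to a purely combinatorial claim about the four time indices $t$, $t+k$, $t+v$, $t+k'+v$: we must partition the four random variables into two pairs whose underlying index sets are disjoint, after which independence of the two resulting vectors is immediate. First I would record the two \emph{automatic} inequalities that hold for every admissible choice of parameters, namely $t \ne t+k$ (since $k \ge 1$) and $t+v \ne t+k'+v$ (since $k' \ge 1$). In particular, no two of the four variables sharing a common time index can come from the same natural pair $\bigl(f_i(X_t), f_j(X_{t+k})\bigr)$ or $\bigl(f_{i'}(X_{t+v}), f_{j'}(X_{t+k'+v})\bigr)$; every coincidence of indices is necessarily crossed between the two natural pairs.

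Next I would organize the argument by the number $d$ of distinct values among the four time indices, which by the preceding remark can only be $2$, $3$, or $4$. When $d=4$ all four variables are mutually independent and the natural pairing already works. When $d=3$ there is exactly one coincidence, and since it must be crossed, I would group the two variables sharing the repeated index into one bivariate vector and the remaining two (which then occupy two further indices, disjoint from the repeated one) into the other. The delicate case is $d=2$: the only index equalities available are $t+v \in \{t, t+k\}$ and $t+k'+v \in \{t, t+k\}$, i.e. $v \in \{0,\, k,\, -k',\, k-k'\}$, and solving these under $t, k, k' \ge 1$ shows that $d=2$ forces $v = 0$ and $k = k'$. Here $f_i(X_t)$ and $f_{i'}(X_{t+v})$ share the index $t$ while $f_j(X_{t+k})$ and $f_{j'}(X_{t+k'+v})$ share the index $t+k$, so the natural pairing fails; but the diagonal pairing $\bigl(f_i(X_t), f_{i'}(X_{t+v})\bigr)$ versus $\bigl(f_j(X_{t+k}), f_{j'}(X_{t+k'+v})\bigr)$ uses the disjoint supports $\{t\}$ and $\{t+k\}$ and hence produces two independent vectors.

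The single routine computation underpinning the whole scheme is the check of $v \in \{0, k, -k', k-k'\}$ against $k, k' \ge 1$: it simultaneously identifies which pairing to use in each configuration and rules out three (or more) simultaneous coincidences. The main obstacle is therefore not difficulty but bookkeeping, and the one genuinely non-obvious point to flag is the degenerate $d=2$ case, the sole configuration in which the obvious pairing breaks and one must switch to the diagonal pairing; I would make sure this case is isolated cleanly and that in every surviving configuration at least one pairing yields disjoint index supports. Once the combinatorial partition is exhibited, independence of the two bivariate vectors follows at once from the i.i.d. assumption, which is exactly the hypothesis needed to invoke Lemma~\ref{lemma:cumulant} and conclude that the cumulant of~\eqref{eq:four} vanishes.
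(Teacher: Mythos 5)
Your proof is correct and follows essentially the same route as the paper: both use the i.i.d.\ assumption to reduce the claim to the purely combinatorial task of splitting the four time indices $t$, $t+k$, $t+v$, $t+v+k'$ into two pairs with disjoint supports, and both end up exhibiting the same two pairings (grouping $t$ with $t+v$, or grouping $t$ with $t+v+k'$). The only difference is bookkeeping: the paper splits into just two cases according to whether $v \in \{k, -k'\}$, whereas you classify by the number $d$ of distinct indices, which costs a little more case-checking (especially at $d=3$ and $d=2$) but arrives at the same pairings.
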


\begin{proof}
Since $X_u,\, u = 1, 2, \ldots$ are i.i.d. we need only to show that we can arrange the four integers $a_0 := t$, $a_1 := t + k$, $a_2 := t + v$, $a_3 := t + v + k'$ using a bijection $\rho$ of $\{0, 1, 2, 3\}$ onto itself so that $\{a_{\rho(0)}, a_{\rho(1)}\} \cap \{a_{\rho(2)}, a_{\rho(3)}\} = \varnothing$. We consider two cases: (a) $v \ne k$ and $v \ne -k'$: then $\{t, t + v\} \cap \{t + k, t + v + k'\} = \varnothing$; (b) $v = k$ or $v = -k'$; then $\{t , t + v + k'\} \cap \{t + k, t + v\} =  \varnothing$.
\end{proof}

\bibliographystyle{plain}

\end{document}